\def\Mbar{\overline{\mathcal{M}}}
\def\M{\mathfrak{M}}
\newtheorem{theorem}{Theorem}[section]
\newtheorem{lemma}[theorem]{Lemma}
\theoremstyle{definition}
\newtheorem{definition}[theorem]{Definition}
\newtheorem{example}[theorem]{Example}
\theoremstyle{remark}
\newtheorem{notation}[theorem]{Notation}
\numberwithin{equation}{section}
\begin{document}

\title{Tautological relations for stable maps to a target variety}

%    Information for first author
\author{Younghan Bae}
%    Address of record for the research reported here
\address{Department of Mathematics, ETH Z\"{u}rich}
%    Current address
\curraddr{Department of Mathematics, ETH Z\"{u}rich, Z\"{u}rich 8092, Switzerland}
\email{younghan.bae@math.ethz.ch}
%    \thanks will become a 1st page footnote.
\thanks{The author was supported in part by ERC Grant ERC-2017-AdG-786580-MACI and Korea Foundation for Advanced Studies}

%    Information for second author
%\author{Author Two}
%\address{Mathematical Research Section, School of Mathematical Sciences,
%Australian National University, Canberra ACT 2601, Australia}
%\email{two@maths.univ.edu.au}
%\thanks{Support information for the second author.}

%    General info
\subjclass[2010]{Primary 14H10; Secondary 14N35}

\date{March, 2020}

%\keywords{Algebraic geometry, moduli space}

\begin{abstract}
We define tautological relations for the moduli space of stable maps to a target variety. Using the double
ramification cycle formula for target varieties of Janda-Pandharipande-Pixton-Zvonkine \cite{J2}, we construct non-trivial tautological relations parallel to Pixton's double ramification cycle relations for the moduli of curves. Examples and applications are discussed.
\end{abstract}

\maketitle

%\section*{This is an unnumbered first-level section head}
%This is an example of an unnumbered first-level heading.

%% The correct journal style for \specialsection is all uppercase; a known bug
%% in amsart.cls prevents this, so input must be uppercase until it is fixed.
%\specialsection*{This is a Special Section Head}
%\specialsection*{THIS IS A SPECIAL SECTION HEAD}
%This is an example of a special section head%
%%%%%%%%%%%%%%%%%%%%%%%%%%%%%%%%%%%%%%%%%%%%%%%%%%%%%%%%%%%%%%%%%%%%%%%%
%\footnote{Here is an example of a footnote. Notice that this footnote
%text is running on so that it can stand as an example of how a footnote
%with separate paragraphs should be written.
%\par
%And here is the beginning of the second paragraph.}%
%%%%%%%%%%%%%%%%%%%%%%%%%%%%%%%%%%%%%%%%%%%%%%%%%%%%%%%%%%%%%%%%%%%%%%%%

\section{Introduction}

\subsection{Tautological relations on the moduli space of stable curves}
Let $\overline{\mathcal{M}}_{g,n}$ be the moduli space space of stable curves of genus $g$ with $n$ marked points.
It is a smooth Deligne-Mumford stack of dimension $3g-3+n$ and has both singular and Chow cohomology theories.
Mumford \cite{M} initiated the study of the subring of tautological classes
\[
R^{*}(\overline{\mathcal{M}}_{g,n}) \subseteq A^{*}(\overline{\mathcal{M}}_{g,n})_{\mathbb{Q}}
\]
of the Chow ring, which we now call the {\em tautological ring}---see \cite[Section 1]{FP} for an introduction and basic definitions.
One advantage of considering the tautological ring is that there is a canonical set of additive generators of
$R^{*}(\overline{\mathcal{M}}_{g,n})$ by boundary strata together with basic $\psi$ and $\kappa$ classes \cite{G}.
A fundamental goal is to find all linear relations among these additive generators---such relations
are called \text{\em tautological relations}. For an excellent survey of this topic, see \cite{P1}. 

Because few tools exist to handle the geometry of moduli spaces of stable curves in higher genus,
finding new tautological relations is not easy. A breakthrough appeared in Pixton's note \cite{P11}, which was
inspired by the Faber-Zagier conjecture on tautological relations in $A^{*}(\mathcal{M}_{g})$ proven in
\cite{PP1}.
Pixton conjectured a systematic means of writing down tautological relations in terms of graph sums.
His conjecture was proven in cohomology \cite{PP} using the theory of Witten's 3-spin class and in Chow \cite{J1} using
the equivariant Gromov-Witten theory of $\mathbb{P}^{1}$.
 
\subsection{Tautological relations on the moduli space of stable maps} 
Let $X$ be a nonsingular projective variety over $\mathbb{C}$. For an effective curve class $\beta \in H_{2}(X, \mathbb{Z})$
and nonnegative integers $g$ and $n$,
we consider the moduli space of stable maps $\overline{\mathcal{M}}_{g,n,\beta}(X)$. A closed point of the moduli
space corresponds to a morphism from a connected nodal curve $C$ of genus $g$ with $n$ marked points to $X$,
\begin{equation}\label{129922}
f : (C, x_{1}, \cdots, x_{n}) \to X, \, \ \  f_{*}[C] = \beta \, .
\end{equation}
By the stability condition, the data \eqref{129922} is required to have only finitely many automorphisms---see  \cite{FP1}
for a foundational treatment.
Unlike $\overline{\mathcal{M}}_{g,n}$, the moduli space of stable maps is typically not smooth and
can have many connected components of different dimensions.
Nevertheless, $\overline{\mathcal{M}}_{g,n,\beta}(X)$ has a natural perfect obstruction theory of expected dimension 
\[
 \textup{vdim} = (1-g)(\text{dim}_{\mathbb{C}} X - 3) +\int_{\beta} c_{1}(X) +n\, ,
 \]
which allows us to define virtual fundamental classes \cite{BM} and Gromov-Witten invariants \cite{BF} for the target variety $X$ .

 The first question that we pursue here is whether a ring of tautological classes can be defined in the rational Chow theory
 of $\overline{\mathcal{M}}_{g,n,\beta}(X)$.
In Section 2, we define tautological classes via fundamental classes and show that their span in the Chow group carries a natural product structure. 
The issue of the multiplication of tautological classes is delicate because
we are considering cycles on a singular space $\overline{\mathcal{M}}_{g,n,\beta}(X)$.
Nevertheless, this product structure can be obtained by realizing tautolgical classes as Fulton's operational Chow classes \cite[Chapter 17]{F}.
In genus $0$, tautological classes were studied earlier by Oprea \cite{O} in cases
where the moduli space of stable maps $\overline{\mathcal{M}}_{0,n,\beta}(X)$ 
is smooth.
Our definition agrees with \cite{O} in the cases he considers.
  
Classical examples of tautological relations on $\overline{\mathcal{M}}_{g,n,\beta}(X)$  come from tautological relations on $\overline{\mathcal{M}}_{g,n}$ by pulling-back relations via the stabilization morphism 
 \begin{equation} \label{11}
 st : \overline{\mathcal{M}}_{g,n,\beta}(X) \to \overline{\mathcal{M}}_{g,n}\,.
\end{equation}
Formulas for the pull-back are presented in Section 2.2.

Our study is motivated by the following two basic questions concerning the structure of tautological relations on
the moduli space of stable maps:
  \begin{enumerate}
  \item Can we use the geometry of $X$ to find tautological relations on $\overline{\mathcal{M}}_{g,n,\beta}(X)$ that are not obtained via pull-back from Pixton's set of tautological relations on $\overline{\mathcal{M}}_{g,n}$?
 
 \item Can we use the tautological relations on $\overline{\mathcal{M}}_{g,n,\beta}(X)$ to determine new relations among Gromov-Witten invariants?
 
 \end{enumerate}
  
 For question (1), the answers should be in the form of general constructions which are valid for all target variety\footnote{For instance, if $X$ is a K3 surface, the virtual fundamental class of the moduli space of stable maps is zero if $\beta$ is nonzero. And it is not likely to be true that these relations come from tautological relations on $\overline{\mathcal{M}}_{g,n}$. In this note we will \textit{not} consider such cases.}. 
 As an example, in \cite{LP}, the authors obtained tautological relations in the Picard group of $\overline{\mathcal{M}}_{0,n, d}(\mathbb{P}^{m})$
 to prove a reconstruction theorem for genus 0 quantum cohomology and quantum K-theory for $$X\subseteq \mathbb{P}^m\, .$$
 Our main result here is a general construction of
 tautological relations for $\overline{\mathcal{M}}_{g,n,\beta}(X)$ 
using the new double
 ramification cycle formula for target varieties of Janda-Pandhari\-pande-Pixton-Zvonkine \cite{J2}.
 Because the construction essentially involves the geometry of $X$, the relations are not
 expected to be pull-backs. Examples and applications are provided in Section 4.

When $X$ is a point, the relations constructed here specialize to the double ramification cycle relations for $\overline{\mathcal{M}}_{g,n}$ conjectured by Pixton \cite{P12} and proven by Clader and Janda in \cite{CJ}. In fact, our proof follows the strategy of \cite{CJ}.

 \subsection*{Acknowledgements}   
  The author is indebted to Johannes Schmitt for his collaboration in proving Lemma 2.6, Longting Wu and Honglu Fan for their collaboration in computing Example 4.2, and Dragos Oprea for sharing his insights on tautological relations on the stable map spaces and pointing out an error in the first version of this paper. Special thanks to my Ph.D. advisor Rahul Pandharipande who suggested this problem to me. The author would also like to thank Felix Janda, Drew Johnson, Hyenho Lho, Georg Oberdieck, Aaron Pixton and Dimitri Zvonkine for helpful conversations. 
  
  This project has received funding from the European Research Council (ERC) under the European Unions Horizon 2020 research and innovation programme (grant agreement No. 786580) and Korea Foundation for Advanced Studies.
 %%%%%%%%%%%%%%%%%%%%%%%%%%%%%%%%%%%%%%%%%%

 \section{X-valued Tautological Ring}
\subsection{X-valued stable graphs}
 Let $X$ be a nonsingular projective variety over $\mathbb{C}$ and let $C(X)$ be the semigroup of effective curve classes in $H_{2}(X,  \mathbb{Z})$. For each element $\beta$ in $C(X)$, an Artin stack $\mathfrak{M}_{g,n,\beta}$ exists that parametrizes genus $g$, $n$ marked prestable curves $C$  together with a labeling on each irreducible component of $C$ by an element of $C(X)$. The labeling 
must satisfy the stability condition and the degrees must sum to $\beta$, see \cite[Section 2]{C} for details. This space is called the \textit{moduli space of prestable curves with $C(X)$-structure}. It is a smooth Artin stack 
and is  \'{ e}tale over the moduli space of marked prestable curves $\mathfrak{M}_{g,n}$.

Let us review the notion of $X$-valued stable graphs following \cite{J2}.
 
 \begin{definition}
 An \textit{X-valued stable graph} $\Gamma \in \mathcal{S}_{g, n, \beta}(X)$ consists of the data
 \[
 \Gamma = (V,\, H,\, L,\, g: V \to \mathbb{Z}_{\geq 0},\, v : H \to V,\, \iota : H \to H,\, \beta : V \to C(X) )
\]
satisfying the properties:
\begin{enumerate}[(i)]
    \item $V$ is a vertex set with a genus function $g : V \to \mathbb{Z}_{\geq 0}\,,$
    \item $H$ is a half-edge set equipped with a vertex assignment $v : H \to V$ and an involution $\iota$,
    \item $(V, H,\iota)$ defines a connected graph satisfying the genus condition
    \[
    \sum_{v \in V} g(v) + h^{1}(\Gamma) = g\,,
    \]
    \item for each vertex $v \in V$, the stability condition holds: if $\beta(v) = 0$, then
    \[
    2g(v) - 2 + n(v) > 0\,, 
    \] 
    where $n(v)$ is the valence of $\Gamma$ at $v$,
    \item the degree condition holds:
    \[
    \sum_{v \in V} \beta(v) = \beta\,.
    \]
\end{enumerate}
\end{definition}
\noindent
For a given graph $\Gamma$, denote $E$ as the set of 2-cycles of $\iota$ corresponding to edges, and $L$ as the set of fixed points of $\iota$ corresponding to the set of $n$ markings.
An automorphism of $\Gamma \in \mathcal{S}_{g, n, \beta}(X)$ consists of automorphisms of the sets $V$ and $H$, which leaves invariant the structures $L, g, v, \iota$, and $\beta$. Let Aut($\Gamma$) denote the automorphism group of $\Gamma$.

For each $X$-valued stable graph $\Gamma$, consider a moduli space $\overline{\mathcal{M}}_{\Gamma}$ of stable maps of the prescribed degeneration together with the canonical morphism \cite[Section 0.2]{J2}
\[
j_{\Gamma} : \overline{\mathcal{M}}_{\Gamma} \to \overline{\mathcal{M}}_{g,n,\beta}(X)\,.
\]
To construct $j_{\Gamma}$, a universal family of stable curves over $\overline{\mathcal{M}}_{\Gamma}$ is required \cite{BM}. The forgetful morphism from $\overline{\mathcal{M}}_{\Gamma}$ to $\mathfrak{M}_{\Gamma}$ carries a relative perfect obstruction 
theory.
The universal curve 
 \begin{equation}
 \pi : \mathcal{C}_{g,n,\beta}(X) \to \overline{\mathcal{M}}_{g,n,\beta}(X)
 \end{equation}
 has $n$ sections $s_{i}$ and a universal evaluation morphism
   \begin{equation}
 f : \mathcal{C}_{g,n,\beta}(X) \to X\,. 
 \end{equation}
Next, the notion of the strata algebra can be extended to $X$-valued stable graphs. 

 \begin{definition}
 A \textit{decorated $X$-valued stable graph $[\Gamma, \gamma]$} is an $X$-valued stable graph $\Gamma \in \mathcal{S}_{g,n,\beta}(X)$ together with the following data $\gamma$\,:
\begin{enumerate}[(i)]
    \item each leg $i \in L$ is decorated with $ev_{i}^{*}\alpha_{i}$, $\alpha_{i} \in A^{*}(X)\,,$
    \item each half-edge $h \in H$ is decorated with a monomial $\psi_{h}^{y[h]}$ for some $y[h] \in \mathbb{Z}_{\geq 0}\,,$
    \item each edge $e \in E$ is decorated with a monomial $ev_{e}^{*}\alpha_{e}$, $\alpha_{e} \in A^{*}(X)$,
    \item each vertex $v \in V$ is decorated with a product of {\em twisted $\kappa$ classes} 
\[
\kappa_{a_{1}, \cdots, a_{m}}(\alpha_{1}, \cdots, \alpha_{m}) = \pi_{m *}(\psi_{n+1}^{a_{1}+1} ev_{n+1}^{*} \alpha_{1} \cdots \psi_{n+m}^{a_{m}+1} ev_{n+m}^{*}\alpha_{n})
\]
where $\pi_{m}: \overline{\mathcal{M}}_{g,n+m,\beta}(X) \to \overline{\mathcal{M}}_{g,n,\beta}(X)$ is the morphism forgetting the last $m$ marked points, $\alpha_{1}, \ldots, \alpha_{m} \in A^{*}(X)$ and $a_{1}, \ldots, a_{m}\in \mathbb{Z}_{\geq -1}$.
\end{enumerate}
\end{definition}
\noindent
We bound the degree of $\gamma$ by the virtual dimension of the moduli space
\[
\text{deg}(\gamma) \leq \textup{vdim} \, \overline{\mathcal{M}}_{g, n, \beta}(X)\,.
\]
Consider the $\mathbb{Q}$-vector space $\mathcal{S}_{g,n,\beta}(X)$ whose basis is given by the isomorphism classes of decorated $X$-valued stable graph $[\Gamma, \gamma ]$. We give a product structure on $\mathcal{S}_{g,n,\beta}(X)$ as follows, see \cite{G, J2}. Let
\[
 [\Gamma_{A}, \gamma_{A}], [\Gamma_{B}, \gamma_{B}] \in \mathcal{S}_{g,n,\beta}(X)
\]
be two elements in $\mathcal{S}_{g,n,\beta}(X)$. The product of two elements is a finite linear combination of decorated $X$-valued graphs as follows:

\begin{enumerate}
\item Consider $\Gamma \in \mathcal{S}_{g,n,\beta}(X)$ with edges colored by $A$ or $B$ so that after contracting the edge not colored as $A$ (resp. $B$), we obtain $\Gamma_{A}$ (resp. $\Gamma_{B}$).

\item For each such $\Gamma$, we assign operational Chow classes by the following rules.

$\bullet$ The operational Chow classes on legs are obtained by multiplying the corresponding leg monomials on $\gamma_{A}$ and $\gamma_{B}$.

$\bullet$ The operational Chow classes on a half edge colored $A$ only or colored $B$ only is determined by $\gamma_{A}$ or $\gamma_{B}$ respectively. On an edge $e=(h, h')$ colored both $A$ and $B$, we multiply by
 \begin{equation} \label{2211}
- (\psi_{h} + \psi_{h'})
\end{equation}
in addition to contributions from $\gamma_{A}$ and $\gamma_{B}$.

$\bullet$ The factors of edges coming from $\gamma_{A}$ and $\gamma_{B}$ descend to $\Gamma$.

$\bullet$ The factors on the vertex $v$ in $\Gamma_{A}$ (resp. $\Gamma_{B}$) are split in all possible ways among the vertices which collapse to $v$ as $\Gamma$ is contracted to $\Gamma_{A}$ and $\Gamma_{B}$. Then we multiply two vertex contributions.
\end{enumerate}
The above product yields a $\mathbb{Q}$-algebra structure on $\mathcal{S}_{g,n,\beta}(X)$.
Push-forward along $j_{\Gamma}$ defines a $\mathbb{Q}$-linear map $q : \mathcal{S}_{g,n,\beta}(X) \to A_{*}(\overline{\mathcal{M}}_{g,n,\beta}(X))$,
 \begin{equation}
 q([\Gamma, \gamma]) = j_{\Gamma *} \left( \gamma \cap [\overline{\mathcal{M}}_{\Gamma}]^{vir} \right) \in A_{*}(\overline{\mathcal{M}}_{g,n,\beta}(X))\,.
\end{equation}
An element of the kernel of $q$ is called a \textit{tautological relation for the target variety $X$}. Unlike in \cite{PP}, there is no intersection product on the Chow group of $A_{*}(\overline{\mathcal{M}}_{g,n,\beta}(X))$, so there is no obvious reason why the kernel of the map $q$ is an ideal of $\mathcal{S}_{g,n,\beta}(X)$. However, the map $q$ factors through the operational Chow ring of $\overline{\mathcal{M}}_{g,n,\beta}(X)$.
\begin{definition}
Let $\mathfrak{M}$ be an Artin stack over $\mathbb{C}$. For each scheme $U$ and morphism $U \to \mathfrak{M}$, an operational Chow class $\alpha \in A^{p}(\mathfrak{M})$ is a collection of morphisms
\[
\alpha_{U} : A_{*}(U) \to A_{*-p}(U)
\]
which is compatible with proper push-forward, flat pull-back, locally complete intersection(l.c.i) pull-back, and Chern classes, see \cite[Chapter 17.1]{F}.
\end{definition}
For example, Chern classes of a vector bundle on a scheme are operational Chow classes. Because the pullback morphism is defined for operational Chow groups, operational Chow classes have well-defined product structure.
\begin{lemma} 
The kernel of $q$ is an ideal in $\mathcal{S}_{g,n,\beta}(X)$.
\end{lemma}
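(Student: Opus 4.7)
Following the remark just before the lemma, the plan is to construct a ring homomorphism
\[
\widetilde q\colon \mathcal{S}_{g,n,\beta}(X) \longrightarrow A^{*}\bigl(\overline{\mathcal{M}}_{g,n,\beta}(X)\bigr)
\]
that factors $q$ as $q = \widetilde q(-)\cap [\overline{\mathcal{M}}_{g,n,\beta}(X)]^{vir}$. Granting such a factorization, the lemma is immediate: capping with the fixed class $[\overline{\mathcal{M}}_{g,n,\beta}(X)]^{vir}$ is $A^{*}$-linear, so the kernel of this cap map is an ideal of the operational Chow ring; its preimage under the ring homomorphism $\widetilde q$ is then an ideal of $\mathcal{S}_{g,n,\beta}(X)$, and this preimage is exactly $\ker q$.

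To define $\widetilde q$, I would send each decorated graph $[\Gamma,\gamma]$ to the operational class $\alpha \mapsto j_{\Gamma*}\bigl(\gamma\cap j_{\Gamma}^{!}\alpha\bigr)$, where $j_{\Gamma}^{!}$ is the virtual pullback associated to the l.c.i.\ inclusion of prestable strata $\mathfrak{M}_{\Gamma}\hookrightarrow \mathfrak{M}_{g,n,\beta}$ together with the inherited perfect obstruction theory on $\overline{\mathcal{M}}_{\Gamma}$. The individual factors in $\gamma$ are operational in the standard way: $\psi_h$ and $ev^{*}\alpha$ are Chern classes of natural line bundles and pullbacks, while twisted $\kappa$'s arise by proper push-forward along the universal curve. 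The identity $j_{\Gamma}^{!}[\overline{\mathcal{M}}_{g,n,\beta}(X)]^{vir} = [\overline{\mathcal{M}}_{\Gamma}]^{vir}$ is a standard compatibility of virtual pullback with the intrinsic obstruction theory, and it yields the required factorization of $q$.

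The main obstacle is verifying that $\widetilde q$ respects the product (1)--(4) on $\mathcal{S}_{g,n,\beta}(X)$. I would expand $\widetilde q([\Gamma_A,\gamma_A])\cdot \widetilde q([\Gamma_B,\gamma_B])$ by base change along the fiber product $\overline{\mathcal{M}}_{\Gamma_A}\times_{\overline{\mathcal{M}}_{g,n,\beta}(X)}\overline{\mathcal{M}}_{\Gamma_B}$. Its connected components are enumerated precisely by the graphs $\Gamma$ of rule (1)---those with edges colored $A$ or $B$ that contract to $\Gamma_A$ and $\Gamma_B$ respectively. Excess intersection on each such component contributes a normal-bundle Euler class that is nontrivial only on doubly colored edges; on such an edge $e=(h,h')$ the Euler class is $-(\psi_h+\psi_{h'})$, reproducing \eqref{2211}. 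The distribution of the $\kappa$- and $ev^{*}$-decorations among the vertices of $\Gamma$ then comes from the projection formula. This is the $X$-valued extension of the boundary-strata computation of \cite{G} for $\overline{\mathcal{M}}_{g,n}$; carrying it out on the smooth Artin base $\mathfrak{M}_{g,n,\beta}$ and transporting via virtual pullback is precisely what allows us to work around the singularities of $\overline{\mathcal{M}}_{g,n,\beta}(X)$.
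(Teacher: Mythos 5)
Your proposal is correct and follows essentially the same route as the paper: factor $q$ through the operational Chow ring via the refined Gysin map of the l.c.i.\ gluing morphism $\mathfrak{M}_{\Gamma}\to\mathfrak{M}_{g,n,\beta}$ on the smooth prestable base, invoke the splitting axiom to recover $q$ by capping with the virtual class, and establish multiplicativity by the excess-intersection argument of Graber--Pandharipande extended to $X$-valued graphs. The paper's proof is exactly this, with the operational class written out on test schemes $U\to\overline{\mathcal{M}}_{g,n,\beta}(X)$ as $V\mapsto j_{s*}(u'^{*}(\gamma)\cap\xi_{\Gamma}^{!}V)$.
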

\begin{proof} 
From the splitting axiom, the map $q$ factors through the following diagram
\begin{equation}
    \begin{tikzcd}
      \mathcal{S}_{g,n,\beta}(X) \arrow{r}{q_{0}} \arrow[swap]{dr}{q} &A^{*}(\overline{\mathcal{M}}_{g,n,\beta}(X)) \arrow{d}{\cap [\overline{\mathcal{M}}_{g,n,\beta}(X)]^{vir}} \\
      & A_{*}(\overline{\mathcal{M}}_{g,n,\beta}(X))\,.
    \end{tikzcd}
\end{equation}
The map $q_{0}$ assigns $[\Gamma, \gamma]$ to an operational Chow class on $\overline{\mathcal{M}}_{g,n,\beta}(X)$ as follows. For each scheme $U$ and a morphism $u : U \to \overline{\mathcal{M}}_{g,n,\beta}(X)$, consider the following fiber diagram
\begin{equation*}
    \begin{tikzcd}
    U' \arrow[r,"j_{u}"] \arrow[d, "u' "]
    & U \arrow[d, "u"] \\
    \overline{\mathcal{M}}_{\Gamma} \arrow[r, "j_{\Gamma}"] \arrow[d]
    & \overline{\mathcal{M}}_{g,n,\beta}(X) \arrow[d,] \\
   \mathfrak{M}_{\Gamma} \arrow[r,"\xi_{\Gamma}" ]
   &\mathfrak{M}_{g,n,\beta}
    \end{tikzcd}
\end{equation*}
where $\mathfrak{M}_{\Gamma}$ is the moduli space of prestable curves with prescribed degeneration defined by $\prod_{v \in V(\Gamma)} \mathfrak{M}_{g(v),n(v),\beta(v)}$ and $\xi_{\Gamma}$ is the gluing morphism. The class 
\begin{equation*}
    q_{0}([\Gamma, \gamma])(s) : A_{*}(U) \to A_{* - |E| - \textup{deg}(\gamma)}(U)
\end{equation*} 
assigns $V \in A_{*}(U)$ to 
\begin{equation*}
    j_{s *}(u'^{*}(\gamma) \cap \xi_{\Gamma}^{!}V)\,.
\end{equation*}
A parallel argument to that given  in \cite[Appendix]{G} shows that the map $q_{0}$ is a ring homomorphism. Therefore the kernel of $q$ is an ideal in $\mathcal{S}_{g,n,\beta}(X)$.
\end{proof}
\noindent From the above, the image of the map $q$ has a $\mathbb{Q}$-algebra structure. We call this ring as the \textit{tautological ring of the moduli space of stable maps to $X$} and write $R^{*}(\overline{\mathcal{M}}_{g,n,\beta}(X))$.

%%%%%%%%%%%%%%%%%%%%%%%%%%%%%%%%%%%%%%%%%%%%%%%%%

\subsection{Tautological relations from Pixton's 3-spin relations}

We briefly review tautological relations on $\overline{\mathcal{M}}_{g,n,\beta}(X)$ coming from tautological relations on the moduli space of stable curves via the stabilization morphism \eqref{11}. We assume $2g-2+n > 0$ throughout this section.
The following lemma illustrates the pull-back formula under the stabilization morphism.
\begin{lemma}\footnote{This lemma was formulated with Johannes Schmitt.}
 Let $st : \overline{\mathcal{M}}_{g,n,\beta}(X) \to \overline{\mathcal{M}}_{g,n}$ be the stabilization morphism. The following hold:
\begin{enumerate}
\item Let $[\Gamma]$ be a boundary class in $R^{*}(\overline{\mathcal{M}}_{g,n})$. Then
\[
st^{*}\frac{1}{|\textup{Aut}(\Gamma)|}[\Gamma] = \sum_{\Gamma'} \frac{1}{|\textup{Aut}(\Gamma')|} [\Gamma']\,.
\]
The sum is over all $X$-valued stable graphs $\Gamma'$ where the graph of $\Gamma'$ is equal to $\Gamma$ but the degree map $\beta(v)$ can vary. 

\item $st^{*}\psi_{i} = \psi_{i} - [D_i]$, where $D_{i}$ is an $X$-valued stable graph with one edge connecting two vertices $v_{1}$ and $v_{2}$ with $g(v_{1}) = g$ so that the only leg attached to $v_{2}$ is the $i$-th leg.  
\item The pull-back of $\kappa$ classes are tautological.
\end{enumerate}
\end{lemma}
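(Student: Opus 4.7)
My plan is to prove the three parts in order, routing each pullback through the smooth Artin stacks of prestable curves,
\[
\overline{\mathcal{M}}_{g,n,\beta}(X) \xrightarrow{p} \mathfrak{M}_{g,n,\beta} \xrightarrow{\nu} \mathfrak{M}_{g,n} \xrightarrow{\sigma} \overline{\mathcal{M}}_{g,n},
\]
in which $\nu$ is \'etale (as recorded in Section 2.1), $\sigma$ is the stabilization morphism for prestable curves, and $p$ is the forgetful morphism that drops the map to $X$ while retaining the $C(X)$-labeling on components. Since $\mathfrak{M}_{g,n}$ and $\mathfrak{M}_{g,n,\beta}$ are smooth Artin stacks, $\nu^{*}$ and $\sigma^{*}$ are well-behaved on Chow groups; virtual classes on $\overline{\mathcal{M}}_{\Gamma'}$ are then related to pullbacks on $\mathfrak{M}_{g,n,\beta}$ through the relative perfect obstruction theory as in Section 2.1.

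For part (1) I would proceed in two steps. The flat pullback $\sigma^{*}[\Gamma]$ is represented by the preimage substack of prestable curves whose stabilization lies in $\overline{\mathcal{M}}_{\Gamma}$; a codimension count on $\mathfrak{M}_{g,n}$, using that every additional unstable vertex in a prestable graph strictly increases the number of edges, shows that the only stratum contributing in codimension $|E(\Gamma)|$ is the one given by $\Gamma$ itself viewed as a prestable graph, with weight $1/|\textup{Aut}(\Gamma)|$. Then $\nu^{*}$ decomposes this class into a disjoint union of strata of $\mathfrak{M}_{g,n,\beta}$ indexed by degree assignments $\beta:V(\Gamma)\to C(X)$ summing to $\beta$; because $\nu$ is \'etale, the weight on the orbit corresponding to $\Gamma'$ is precisely $1/|\textup{Aut}(\Gamma')|$. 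Finally, $p^{*}$ transports these strata to the decorated $X$-valued stable graph classes on $\overline{\mathcal{M}}_{g,n,\beta}(X)$ via compatibility of the virtual class with the gluing morphism $\xi_{\Gamma}$ used in Section 2.1.

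Part (2) is the stable-maps analogue of the classical comparison formula $\pi^{*}\psi_{i} = \psi_{i} - D_{i,n+1}$ for the forgetful map $\overline{\mathcal{M}}_{g,n+1} \to \overline{\mathcal{M}}_{g,n}$. The canonical map of cotangent lines $st^{*}\mathbb{L}_{i} \to \mathbb{L}_{i}$ on $\overline{\mathcal{M}}_{g,n,\beta}(X)$ is an isomorphism away from the locus where $st$ contracts the component carrying the $i$-th marking. A contracted component carrying $x_{i}$ must be rational, have exactly two special points ($x_{i}$ and a single node), and carry a nonzero degree $\beta(v)$ to satisfy stability of the map---this locus is precisely the $X$-valued stable graph $D_{i}$ in the statement. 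A local computation at a generic point of $D_{i}$, modelled on the standard analysis at a rational tail, identifies the divisor of zeros of $st^{*}\mathbb{L}_{i} \to \mathbb{L}_{i}$ with $[D_{i}]$.

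For part (3), consider the commutative square
\[
\begin{tikzcd}
\mathcal{C}_{g,n,\beta}(X) \arrow[r,"\widetilde{st}"] \arrow[d,"\pi"] & \mathcal{C}_{g,n} \arrow[d,"\pi_{0}"] \\
\overline{\mathcal{M}}_{g,n,\beta}(X) \arrow[r,"st"] & \overline{\mathcal{M}}_{g,n}
\end{tikzcd}
\]
where the top arrow is the fibred stabilization of the universal curve. Since $\kappa_{a}=\pi_{0*}(\psi_{n+1}^{a+1})$ on $\overline{\mathcal{M}}_{g,n}$, a base-change plus projection-formula calculation reduces $st^{*}\kappa_{a}$ to $\pi_{*}(\widetilde{st}^{*}\psi_{n+1}^{a+1})$; applying part (2) fibrewise to rewrite $\widetilde{st}^{*}\psi_{n+1}$ in terms of $\psi_{n+1}$ and the exceptional boundary divisors, and then pushing forward along $\pi$, expresses $st^{*}\kappa_{a}$ as a sum of untwisted $\kappa$ classes on $\overline{\mathcal{M}}_{g,n,\beta}(X)$ together with corrections supported on $X$-valued boundary strata decorated by $\psi$ classes---each of which is tautological by the definition in Section 2.1. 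The main obstacle I anticipate lies in part (1): one must genuinely rule out contributions from deeper prestable strata in $\sigma^{-1}(\overline{\mathcal{M}}_{\Gamma})$ coming from curves with arbitrarily many contractible rational subgraphs, which is precisely why the argument is routed through the smooth stacks $\mathfrak{M}_{g,n}$ and $\mathfrak{M}_{g,n,\beta}$ rather than worked out directly on the singular space $\overline{\mathcal{M}}_{g,n,\beta}(X)$.
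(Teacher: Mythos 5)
Your proposal follows essentially the same route as the paper: everything is reduced to the smooth stack $\mathfrak{M}_{g,n,\beta}$ over $\overline{\mathcal{M}}_{g,n}$, parts (1) and (2) are the standard Behrend--Manin comparison statements (which the paper simply cites), and part (3) is proved by comparing universal curves through the fiberwise stabilization. The only cosmetic difference is that in (3) you phrase the comparison via $\widetilde{st}^{*}\psi_{n+1}$, while the paper uses the invariance of the log canonical class $c_{1}(\omega_{\mathrm{log}})$ together with the birationality of $\mathfrak{M}_{g,n+1,\beta}\to\tilde{\mathfrak{C}}$ --- the same computation in different notation.
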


\begin{proof} It is sufficient to prove the lemma for the stabilization morphism $\mathfrak{M}_{g,n,\beta} \to \overline{\mathcal{M}}_{g,n} $. Proof of (1) and (2) are well known, see \cite{BM}.

Proof of (3). The pull-back formula follows from the fact that the log canonical sheaf $\omega_{\textup{log}}$ does not change under the stabilization for semistable curves. Consider the following diagram
\begin{equation}
\begin{tikzcd}
  \mathfrak{M}_{g,n+1,\beta} \arrow[r, "st"] \arrow[rd, "\pi_{1}"] & \tilde{\mathfrak{C}} \arrow[d, "\pi_{2}"]\\
                                          & \mathfrak{M}_{g,n,\beta}
       \end{tikzcd}
\end{equation}
where $\tilde{\mathfrak{C}}$ is the pull-back of the universal curve over $\overline{\mathcal{M}}_{g,n}$ under the stabilization morphism and $st : \mathfrak{M}_{g,n+1,\beta} \to \tilde{\mathfrak{C}}$ is the fiberwise stabilization. Let $\omega_{i}=\omega_{\pi_{i}}^{\textup{log}}$, $i=1, 2$, be the log canonical sheaves for each projections. Then, 
\begin{equation*}
    st^{*}c_{1}(\omega_{2}) = c_{1}(\omega_{1}) - [D]\,,
\end{equation*}
 where D is the sum of stable graphs with one edge connecting two vertex $v_{1}$ and $v_{2}$ where $g(v_{1})=g, g(v_{2})=0$ and the only leg adjacent to $v_{2}$ is the $n+1$-th leg. Because the morphism $st$ is birational, 
\begin{equation*}
    st^{*}\kappa_{n} = \pi_{1 *} (c_{1}(\omega_{1}) - D)^{n+1}\,.
\end{equation*}
After expanding the right hand side, we get the pull-back formula.
\end{proof}

\begin{example}
 From the above lemma, we get
 \[
 st^{*}\kappa_{1} = \kappa_{1} + [D]
 \]
  in $R^{1}(\overline{\mathcal{M}}_{g,n,\beta}(X))$ where $D$ is the sum of $X$-valued stable graph with one edge connecting two vertices $v_{1}$ and $v_{2}$ where $g(v_{1}) = g, g(v_{2}) = 0$ and no leg attached to $v_{2}$.
\end{example}
We can also define the notion of tautological relations on the stack of prestable curves $\M_{g,n}$ \cite{BS}. It is natural to ask whether there are more relations on $\M_{g,n}$ because the stabilization morphism factors through $\M_{g,n}$. Unlike $\Mbar_{g,n}$, the stack $\M_{g,n}$ involves difficulties to find relations. A full description of tautological relations on $\mathfrak{M}_{0,n}$ will appear in \cite{BS}.

%%%%%%%%%%%%%%%%%%%%%%%%%%%%%%%%%%%%%%%%%%%%%%

\section{Tautological relations from double ramification cycles}

\subsection{Twisted double ramification relations}
 Let $S$ be a line bundle over $X$ and let $k \in \mathbb{Z}$ be an integer. A vector $A=(a_{1}, \cdots, a_{n}) \in \mathbb{Z}^{n}$ of \textit{k-twisted double ramification data for a target variety} is defined by the condition
 \begin{equation} \label{31}
 \sum_{i=1}^{n} a_{i} = \int_{\beta} c_{1}(S) + k(2g-2+n)\,.
\end{equation}
Under this condition, we define $k$-twisted double ramification relations as follows. For each genus $g$, $n$ pointed nodal curve $(C, x_{1}, \cdots, x_{n})$, let 
\[
\omega_{\textup{log}} = \omega_{C}(x_{1}+ \cdots+ x_{n})
\]
be the log canonical line bundle. In (2.1) and (2.2), we defined the universal curve and the universal evaluation map
\begin{equation*}
    \begin{tikzcd}
      \mathcal{C}_{g,n,\beta}(X) \arrow[r, "f"] \arrow[d, "\pi"] & X \\
      \overline{\mathcal{M}}_{g,n,\beta}(X)\,.
    \end{tikzcd}
\end{equation*}
\noindent
The following tautological classes are obtained from the above diagram

$\bullet$ $\xi_{i}=c_{1}(s^{*}_{i}f^{*}S)\,,$

$\bullet$ $\xi = f^{*}c_{1}(S)\,,$ 

$\bullet$ $\eta_{a,b}=\pi_{*}(c_{1}((\omega_{\textup{log}}))^{a}\xi^{b})\,,$

$\bullet$ $\eta = \eta_{0,2} = \pi_{*}(\xi^{2})\,.$

\noindent
The subscript $i$ corresponds to the $i$-th marked point and $a, b$ are nonnegative integers. To state the double ramification (DR) vanishing formula, we recall the definition of weights for $X$-valued stable graphs.
\begin{definition}
   Let $\Gamma \in \mathcal{S}_{g,n,\beta}(X)$ be an $X$-valued stable graph. A \textit{k-weighting mod $r$} of $\Gamma$ is a function on the set of half edges, 
\[
w : H(\Gamma) \to \{0, 1, \cdots, r-1\}
\]
which satisfies:
\begin{enumerate}[(i)]
\item $\forall i \in L(\Gamma)$, corresponding to $i$-th marking, 
\[
w(i) = a_{i} \mod r\,,
\] 
\item $\forall e \in E(\Gamma)$, corresponding to half edges $h, h'$, 
\[
w(h) + w(h') = 0 \mod r\,,
\]
\item $\forall v \in V(\Gamma)$,
\[
\sum_{v(h)=v} w(h) = \int_{\beta(v)} c_{1}(S) + k(2g(v) - 2 + n(v)) \mod r\,,
\]
where the sum is over all half-edges incident to $v$.
\end{enumerate}
\end{definition}
\noindent We denote $\mathsf{W}_{\Gamma, r, k}$ be the set of all $k$-weightings mod $r$ of $\Gamma$. 
 \vspace{5pt}

 Let $A$ be a vector of $k$-twisted ramification data for genus $g$. For each positive integer $r$, let $\mathsf{P}^{d, r}_{g,A,\beta, k}$ be the degree $d$ component of the class
 \begin{equation}
 \begin{aligned} \label{311}
 {} & \sum_{\Gamma \in \mathcal{S}_{g,n,\beta}(X)} \sum_{w \in \mathsf{W}_{\Gamma,r,k}} \frac{r^{-h^1(\Gamma)}}{|\textup{Aut}(\Gamma)|} j_{\Gamma *} \Bigg[\prod_{i=1}^{n} \textup{exp}(\frac{1}{2}a_{i}^{2}\psi_{i} + a_{i}\xi_{i}) \\
&  \times \prod_{v} \textup{exp}(-\frac{1}{2}\eta(v) - k \eta_{1,1}(v) - \frac{1}{2} k^{2} \kappa_{1}(v)) 
  \prod_{e=(h, h')} \frac{1- \textup{exp} \big(-\frac{w(h)w(h')}{2}(\psi_{h} + \psi_{h'})\big)}{\psi_{h} + \psi_{h'}}\Bigg].
\end{aligned}
\end{equation}
The argument of \cite[Appendix]{J} can be directly applied to $X$-valued graph sums and the class $\mathsf{P}^{d, r}_{g,n,\beta, k}$ is polynomial in $r$ for sufficiently large $r$ \cite[Proposition 1]{J2}. Denote $\mathsf{P}^{d}_{g,n,\beta, k}$ to be the constant term. In the next section, we will prove $\mathsf{P}^{d}_{g,n,\beta, k}$ vanishes if $d>g$. 
 %%%%%%%%%%%%%%%%%%%%%%%%%%%%%%%%%%%%
 %%%%%% Section 3.2 %%%%%%%%%%%%%%%%%
 %%%%%%%%%%%%%%%%%%%%%%%%%%%%%%%%%%%%
 \subsection{The vanishing result} 
In this section, we closely follow Clader-Janda \cite{CJ} and Janda-Pandharipande-Pixton-Zvonkine \cite{J2}. We first outline constructions in \cite{J2}. For each positive integer $r$, we consider the moduli space $\overline{\mathcal{M}}^{r}_{g,A,\beta}(X, S)$ of stable maps $(f: (C, x_{1}, \cdots, x_{n}) \to X)$ from twisted prestable curve\footnote{Let $\mu_{r}$ be the group of $r$-th root of unity. On the \'etale neighborhood of a node, the orbifold structure of the family of twisted prestable curves 
\[ (x, y) \mapsto z\,, \,\, z = xy
\]
is given by taking $\mu_{r} \times \mu_{r}$ quotient in the domain and $\mu_{r}$ quotient in the target. For the definition of the moduli space of prestable curves, see \cite[Section 1.2]{J2}.
} $C$ with an $r$-th root of the line bundle 
 \[
 f^{*}S \otimes \omega^{\otimes k}_{\textup{log}}(- \sum_{i=1}^{n} a_{i}x_{i}).
 \]
Let $\mathfrak{M}^{r,L}_{g,n}$ be the stack of prestable twisted curves with a degree 0 line bundle without conditions on the stabilizers and let $\mathfrak{M}^{Z}_{g,n}$ be the stack of prestable curves with a degree 0 line bundle. We have a morphism
 \begin{equation}
 \overline{\mathcal{M}}_{g,n,\beta}(X) \to \mathfrak{M}^{Z}_{g,n}, \, \, [f] \to (C, f^{*}S \otimes \omega^{\otimes k}_{\textup{log}}(- \sum_{i=1}^{n} a_{i}x_{i}))
 \end{equation}
and a fiber diagram
\begin{equation} 
 \begin{tikzcd} 
 \overline{\mathcal{M}}^{r}_{g,A,\beta}(X, S) \arrow[r, "\epsilon"] \arrow[d]
    & \overline{\mathcal{M}}_{g,n,\beta}(X) \arrow[d,] \\
  \mathfrak{M}^{r, L}_{g,n} \arrow[r,"\epsilon" ]
&\mathfrak{M}^{Z}_{g,n}.
\end{tikzcd}
\end{equation}
Here, $\epsilon: \mathfrak{M}^{r, L}_{g,n} \to \mathfrak{M}^{Z}_{g,n}$ is the composition of two morphisms
\begin{equation*}
\M^{r,L}_{g,n} \to \M^{r,Z,\textup{triv}}_{g,n} \to \M^{Z}_{g,n}
\end{equation*}
where the first morphism maps $(C,L)$ to $(C,Z=L^{\otimes r})$, and the second morphism comes from the $r$-th root construction on boundary divisors.
\begin{theorem}
If $d>g$, and $n \geq 1$, then $\mathsf{P}^{d}_{g,A,\beta,k} = 0 \in A_{\textup{vdim} -d} (\overline{\mathcal{M}}_{g,n,\beta}(X))$.
\end{theorem}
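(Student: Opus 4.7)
The plan is to follow the strategy of Clader--Janda \cite{CJ}, replacing the moduli of stable curves with the moduli of stable maps and using the $r$-th root construction on $\overline{\mathcal{M}}^{r}_{g,A,\beta}(X,S)$ reviewed just above the theorem. The first step is to reinterpret the graph sum $\mathsf{P}^{d,r}_{g,A,\beta,k}$ Chern-theoretically. On the universal twisted curve over $\overline{\mathcal{M}}^{r}_{g,A,\beta}(X,S)$ there is a universal line bundle $\mathcal{L}$ whose $r$-th power is $f^{*}S \otimes \omega^{\otimes k}_{\textup{log}}(-\sum a_{i}x_{i})$, and by \eqref{31} the fiberwise degree of $\mathcal{L}$ is $0$. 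Applying Chiodo's orbifold Grothendieck--Riemann--Roch formula, in the target-variety form established in \cite[Section 2]{J2}, and pushing forward along $\epsilon$, one obtains an identity of the form
\[
\epsilon_{*}\Bigl( c\bigl(-R\pi_{*}\mathcal{L}\bigr) \cap [\overline{\mathcal{M}}^{r}_{g,A,\beta}(X,S)]^{vir} \Bigr) \;=\; r^{N}\cdot \mathsf{P}^{r}_{g,A,\beta,k}
\]
where the right-hand side collects the graph sums in \eqref{311} and the power $r^{N}$ records the $r^{-h^{1}(\Gamma)}$ prefactors, matched degree by degree.

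Next, I would show that for $r \gg 0$ the codimension-$d$ component of the left-hand side vanishes whenever $d>g$. The key input is a rank bound: on the generic fiber we have $H^{0}(C,\mathcal{L})=0$ and $\dim H^{1}(C,\mathcal{L})=g$ by Riemann--Roch, so the virtual bundle $-R\pi_{*}\mathcal{L}$ has virtual rank $g$. Following \cite{CJ}, one removes the contributions from the locus where $R^{0}\pi_{*}\mathcal{L}$ is nonzero either by passing to an appropriate partial resolution of $\overline{\mathcal{M}}^{r}_{g,A,\beta}(X,S)$ or by further twisting $\mathcal{L}$ by a section supported on a chosen marked point; this is the step where the hypothesis $n \geq 1$ is used. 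After this reduction, $-R\pi_{*}\mathcal{L} = R^{1}\pi_{*}\mathcal{L}$ is an honest vector bundle of rank $g$, and its $d$-th Chern class vanishes tautologically for $d>g$.

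Finally, the polynomiality of $\mathsf{P}^{d,r}_{g,A,\beta,k}$ in $r$, established in Section 3.1 following \cite[Appendix]{J}, converts vanishing for all large $r$ into vanishing of the constant term $\mathsf{P}^{d}_{g,A,\beta,k}$, which is the desired statement.

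The main obstacle is the rank reduction step. On the honest moduli space $\overline{\mathcal{M}}^{r}_{g,A,\beta}(X,S)$ the derived pushforward $R\pi_{*}\mathcal{L}$ is not a vector bundle, and loci where $\mathcal{L}$ becomes effective must be absorbed by a careful decomposition along boundary strata; in the target-variety setting one must additionally verify that the presence of $f^{*}S$ does not spoil the Euler-characteristic bookkeeping or the Clader--Janda decomposition. However, since only the fiberwise degree of $\mathcal{L}$ enters the Riemann--Roch bound $h^{0}-h^{1} = 1-g$, and this degree is still $0$ by \eqref{31}, the argument of \cite{CJ} should carry over once the orbifold Chiodo formula of \cite{J2} has been invoked.
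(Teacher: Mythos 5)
Your overall architecture matches the paper's: reinterpret the graph sum via Chiodo's formula on $\overline{\mathcal{M}}^{r}_{g,A,\beta}(X,S)$ as in \cite[Section 2]{J2}, reduce $-R\pi_{*}\mathcal{L}$ to an honest rank-$g$ bundle so that $c_{d}$ vanishes for $d>g$, and use polynomiality in $r$ to extract the constant term. You also correctly locate the hard point (killing $R^{0}\pi_{*}\mathcal{L}$) and the role of $n\geq 1$: the shift $a_{1}\mapsto a_{1}+r$, i.e.\ twisting down at a marked point, is the paper's Step 1. But there is a genuine gap exactly where you wave the difficulty away. You assert that ``only the fiberwise degree of $\mathcal{L}$ enters the Riemann--Roch bound'' and that therefore ``the argument of \cite{CJ} should carry over.'' What is actually needed for $R^{0}\pi_{*}\mathcal{L}=0$ on the nose is that $\mathcal{L}$ have negative degree on \emph{every irreducible component of every fiber}, not merely negative total degree. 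In \cite{CJ} this componentwise negativity is forced by stability together with $k\leq 0$ and $a'_{i}\geq 0$, since every component satisfies $2g_{v}-2+n_{v}>0$. In the target-variety setting this fails: a component with $\beta_{v}\neq 0$ is stable even when $2g_{v}-2+n_{v}\leq 0$, and the term $\int_{\beta_{v}}c_{1}(S)$ can make the component degree
\[
\frac{1}{r}\Bigl[\,\int_{\beta_{v}}c_{1}(S)+k(2g_{v}-2+n_{v})-\sum_{i\dashv v}a'_{i}\Bigr]
\]
positive, in which case $-R\pi_{*}\mathcal{L}$ remains only a $K$-theory class and $c_{d}$ of a class of virtual rank $g$ has no reason to vanish for $d>g$.

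The paper's Step 2 supplies the missing mechanism, and it is genuinely new relative to \cite{CJ}: write $S=S_{1}^{\otimes x}\otimes S_{2}^{\otimes y}$ with $S_{1},S_{2}$ ample, observe that $\mathsf{P}^{d}_{g,A',\beta,k}$ is polynomial in $x$ and $y$ (as well as in $k$ and in the entries of $A$, which is how one also reduces to a single negative $a_{i}$ and to $k<0$), and conclude that it suffices to prove vanishing for $x,y\ll 0$, where $\int_{\beta_{v}}c_{1}(S)$ is arbitrarily negative on every component with $\beta_{v}\neq 0$. Your proposal needs this (or a substitute) to make the Clader--Janda degree analysis go through. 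A smaller inaccuracy: before the shift, a degree-$0$ root with $H^{0}=0$ on the generic fiber has $h^{1}=g-1$ up to age corrections, not $g$; the honest rank-$g$ bundle only appears after the translation makes the total degree $-1$, which again signals that the shift and the componentwise negativity cannot be deferred to a remark at the end.
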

\begin{proof} 
We follow four basic steps of the vanishing argument of \cite{CJ}.

 \textit{Step 1. Reduction step.} 
 From the polynomiality of $\mathsf{P}^{d}_{g,A,\beta,k}$ in $A$, it is enough to prove the vanishing of \eqref{311} when exactly one $a_{i}$ is negative, see \cite[Section 5.3]{CJ}. Assume that $a_{1} < 0$ and $a_{i} \geq 0$ for all $i \geq 2$. Choose a positive number $r > \textup{max} \{ |a_{i}| \}$ and take $A' = (a'_{1},\cdots,a'_{n}) = (a_{1}+r, a_{2},\cdots,a_{n})$. The constraint \eqref{31} is still valid modulo $r$ and the constant term of \eqref{311} is invariant under the translation. Let $\mathfrak{M}_{g,n}^{Z'}$ be the stack of prestable curves together with a degree $-r$ line bundle, and $\mathfrak{M}^{r, L'}_{g,n}$ be the stack of prestable twisted curves together with a degree $-1$ line bundle. There is a universal twisted curve 
\begin{equation}\label{35}
\pi : \mathfrak{C}^{r, L'}_{g,n} \to \mathfrak{M}^{r, L'}_{g,n}
\end{equation}
and the universal line bundle $\mathcal{L}_{A'} \to \mathfrak{C}_{g,n}^{r, L'}$.

 \textit{Step 2. Use the polynomiality of $\mathsf{P}^{d}_{g,A,\beta,k}$.} From the polynomiality of $\mathsf{P}^{d}_{g,A',\beta,k}$ in $k$ \cite{PixtonZagier}, we may assume that $k$ is a negative number. Since $X$ is a projective variety, $S$ can be written as a difference of two ample line bundles $S_{1}$ and $S_{2}$. Introduce new variables $x, y$ and write $S = S_{1}^{\otimes x} \otimes S_{2}^{\otimes y}$. We can view the expression $\mathsf{P}^{d}_{g,A',\beta,k}(S)$ as a polynomial in $x, y, a'_{1}, \cdots, a'_{n-1}$, after substituting \eqref{31}. The polynomiality ensures that it suffices to prove vanishing for $x, y \ll 0$.
 
 \textit{Step 3. Use stability and the degree analysis of \cite[Lemma 4.2]{CJ}.} 
Consider the morphism \eqref{35}.  The degree of the orbifold line bundle $\mathcal{L}_{A'}$ restricted to each fiber on the irreducible component $C_{v}$ is
 \[
\frac{1}{r} \Bigg[ \int_{\beta_{v}} c_{1}(S) + k(2g_{v} - 2 + n_{v}) - \sum_{i \dashv v} a'_{i} \Bigg]\,.
 \] 
 If $\beta_{v}$ is $0$, $2g_{v} - 2 + n_{v} > 0$ and the degree of the line bundle is negative. If $\beta_{v}$ is nonzero, $2g_{v} - 2 + n_{v}$ can be negative. However, we assumed that $x,y \ll 0$ so the degree of the line bundle is also negative. The proof of \cite[Lemma 4.2]{CJ} implies $-R\pi_{*}\mathcal{L}_{A'} $ is a locally free sheaf of rank $g$. 

\textit{Step 4. Use the Grothendieck-Riemann-Roch formula in \cite[Section 2]{J2}.}
From the computation in \cite[Corollary 11]{J2}, $r^{-2g+2d+1}\epsilon_{*}c_{d}(-R\pi_{*}\mathcal{L}_{A'})$ is obtained by substituting $r = 0$ into the degree $d$ part of
\begin{equation}
\begin{split} \label{36}
{} & \sum_{\Gamma \in \mathcal{S}_{g,n,\beta}(X)} \sum_{w \in \mathsf{W}_{\Gamma,r,k}} \frac{r^{-h^{1}(\Gamma)}}{|\textup{Aut}(\Gamma)|} j_{\Gamma *} \Bigg[\prod_{v} \textup{exp}(-\frac{1}{2}\eta(v) - k \eta_{1,1}(v) - \frac{1}{2} k^{2} \kappa_{1}(v)) \\
& \prod_{e=(h, h')} \frac{1- \textup{exp} \big(-\frac{w(h)w(h')}{2}(\psi_{h} + \psi_{h'})\big)}{\psi_{h} + \psi_{h'}}\Bigg]\,.
\end{split}
\end{equation}
Twisting by $\omega_{\textup{log}}^{\otimes k}$ produces additional terms in \eqref{36}. The above formula vanishes for all $d$ greater than $g$. By the pull-back formula in \cite[Lemma 4]{J2}, we obtain the vanishing result on $\overline{\mathcal{M}}_{g,n,\beta}(X)$.
\end{proof}
In \cite{J2}, the untwisted double ramification cycle formula for the target variety was proven via relative/orbifold Gromov-Witten theory (the cycle $\mathsf{DR}_{g,n,\beta}(X,S)$ was defined in \cite[Definition 1]{J2}). In fact, a proof of the untwisted double ramification cycle relations was implicitly stated in \cite{J2}.
\begin{proof}[Proof for untwisted classes] An alternative proof follows from the localization formula in \cite[Section 3]{J2}. For $l \geq 1$, consider the class
\begin{equation} \label{34}
    \textup{Coeff}_{t^{0}}[ \epsilon_{*} ( t^{l} [\overline{\mathcal{M}}_{g,A,\beta}(\mathbb{P}(X,S)[r], D_{\infty})]^{vir}) ]
\end{equation}
in $A_{*}(\overline{\mathcal{M}}_{g,n,\beta}(X))$. From \cite[Proposition 9]{J2} and the localization formula, (\ref{34}) is a Laurent polynomial in $r$. The coefficient of $r^{1-l}$ of (\ref{34}), should vanish for all $l \geq 1$. If $l = 1$, we prove Theorem 3.3 and if $l \geq 2$, we get the vanishing result $\mathsf{P}^{g+l-1}_{g,A,\beta} = 0$.
\end{proof}
%%%%%%%%%%%%%%%%%%%%%%%%%%%%%%%%%%%%%%
%%%%%%%%Section 4.%%%%%%%%%%%%%%%%%%%%
%%%%%%%%%%%%%%%%%%%%%%%%%%%%%%%%%%%%%%
\section{Examples}
\noindent
From the double ramification cycle relations, tautological relations on the moduli space of stable maps to a target $X$ were constructed. Returning back to the questions in the introduction, it is natural to ask whether these relations can be obtained from tautological relations on $\mathfrak{M}_{g,n}$. Consider a system of ideals
\begin{equation}
    I_{g,n,\beta} \subset \mathcal{S}_{g,n,\beta}(X)\,,
\end{equation}
which is the smallest system satisfying three conditions:
\begin{enumerate}[(i)]
    \item $I_{g,n,\beta}$ contains pull-back of every tautological relations on $\mathfrak{M}_{g,n}$\footnote{Tautological relations on $\mathfrak{M}_{g,n}$ can be defined similarly as tautological relations on $\overline{\mathcal{M}}_{g,n}$. The precise definitions and computations  will be studied in \cite{BS}.},
    \item $I_{g,n,\beta}$ is closed under the map $\mathcal{S}_{g,n+1,\beta}(X) \to \mathcal{S}_{g,n,\beta}(X)$ induced by forgetting the last marked point,
    \item The system of ideals $\{I_{g,n,\beta} \}$ is closed under the map
    \begin{equation*}
        \prod_{v \in V(\Gamma)} \mathcal{S}_{g_{v},n_{v},\beta_{v}}(X) \to \mathcal{S}_{g,n,\beta}(X)
    \end{equation*} 
induced by any $X$-valued stable graph $\Gamma$. 
\end{enumerate}
We say a class $R \in \mathcal{S}_{g,n,\beta}(X)$ \textit{is obtained from moduli spaces of curves} if $R$ is a class in $I_{g,n,\beta}$.

In many cases, it is difficult to prove that a tautological relation in $\overline{\mathcal{M}}_{g,n,\beta}(X)$ does not come from moduli spaces of curves. On the other hand, we will see that some DR relations are obtained from moduli spaces of curves in nontrivial ways.

%%%%%%%%%%%%%%%

\subsection{Genus 0 case}\footnote{The following genus $0$ computation was done with Honglu Fan and Longting Wu.}
Let $S \to X$ be a line bundle over a nonsingular projective variety $X$. For each integer $d$ greater than $g$, we have the vanishing result $\mathsf{P}^{d}_{g,A,\beta} = 0$ proven in the previous section. In this section, we extract tautological relations for the space $X$ by using the polynomiality of  $\mathsf{P}^{d}_{g,A,\beta}$. The class $\mathsf{P}^{d}_{g,A,\beta}$ is polynomial in the ramification data $A$. So, each coefficients of the polynomial in variables in $A$ should vanish. 

We can further separate each relations by considering the degree of the line bundle. The degree constraint
\begin{equation*} \label{41}
\int_{\beta} c_{1}(S) - \sum_{i=1}^{n} a_{i} = 0
\end{equation*} 
has a scale invariance
 \begin{equation}
  a_{i} \mapsto m a_{i}\,, \, \, \, S \mapsto S^{\otimes m}
\end{equation}
for $m \in \mathbb{Z}$. Considering a family of relations $\mathsf{P}^{d}_{g,mA,\beta} = 0$ for all integer $m$, each basic tautological class has a degree with respect to $m$. Therefore each relation breaks into smaller relations according to the degree $m$.
\begin{notation} In the following examples, the symbol
\[
\begin{tikzpicture}[baseline=3pt,->,>=,node distance=1.3cm,thick,main node/.style={circle,draw,font=\Large,scale=0.5}]
  \node at (0,0.2) [main node] (A){g,$\beta$};
\end{tikzpicture} 
\]
denotes a vertex with genus $g$ with degree $\beta$. If the genus of the vertex is $0$, we omit $0$. We write 
\[
\Bigg[ \begin{tikzpicture}[baseline=3pt,->,>=,node distance=1.3cm,thick,main node/.style={circle,draw,font=\Large,scale=0.4}]
  \node at (0,0.2) [main node] (A){$\beta_{1}$};
  \node at (1,0.2) [main node] (B){$\beta_{2}$};
  \node at (-0.2,0.8) (p1) {\textup{1}};
  \node at (1.2,0.8) (p2) {\textup{2}};
  \draw [-] (A) to (p1);
  \draw [-] (A) to (B);
  \draw [-] (B) to (p2); 
  \end{tikzpicture} \Bigg]
\]
to indicate the sum over all possible stable splittings $\beta_{1}+\beta_{2}=\beta$. Also, we use the
notation $b = \int_{\beta} c_{1}(S)$ and $b_{i} = \int_{\beta(v_{i})} c_{1}(S)$.
\end{notation}

\begin{example}
Let $g = 0$, $n = 2$, $d = 1$. We show that all relations can be obtained from the moduli space of prestable curves. There are three prestable graphs with at most one edge. The term $\mathsf{P}^{1}_{0, A, \beta}$ is equal to 
\begin{align*}
(-\frac{1}{2}\eta + \frac{1}{2}a_{1}^{2}\psi_{1} + \frac{1}{2} a_{2}^{2}\psi_{2} + a_{1}\xi_{1} + a_{2}\xi_{2}) \Bigg[ \begin{tikzpicture}[baseline=3pt,->,>=,node distance=1.3cm,thick,main node/.style={circle,draw,font=\Large,scale=0.4}]
  \node at (0,0.2) [main node] (A){$\beta$};
  \node at (-0.2,0.8) (p1) {\textup{1}};
  \node at (0.2,0.8) (p2) {\textup{2}};
  \draw [-] (A) to (p1);
  \draw [-] (A) to (p2); 
  \end{tikzpicture} \Bigg]
- 
\frac{b_{2}^{2}}{2} \Bigg[ \begin{tikzpicture}[baseline=3pt,->,>=,node distance=1.3cm,thick,main node/.style={circle,draw,font=\Large,scale=0.4}]
  \node at (0,0.2) [main node] (A){$\beta_{1}$};
  \node at (1,0.2) [main node] (B){$\beta_{2}$};
  \node at (-0.2,0.8) (p1) {\textup{1}};
  \node at (0.2,0.8) (p2) {\textup{2}};
  \draw [-] (A) to (p1);
  \draw [-] (A) to (B);
  \draw [-] (A) to (p2); 
  \end{tikzpicture} \Bigg]  
-
\frac{(b_{1} - a_{1})^{2}}{2} \Bigg[ \begin{tikzpicture}[baseline=3pt,->,>=,node distance=1.3cm,thick,main node/.style={circle,draw,font=\Large,scale=0.4}]
  \node at (0,0.2) [main node] (A){$\beta_{1}$};
  \node at (1,0.2) [main node] (B){$\beta_{2}$};
  \node at (-0.2,0.8) (p1) {\textup{1}};
  \node at (1.2,0.8) (p2) {\textup{2}};
  \draw [-] (A) to (p1);
  \draw [-] (A) to (B);
  \draw [-] (B) to (p2); 
  \end{tikzpicture} \Bigg]
\end{align*}
After substituting $a_{2} = b - a_{1}$ in $\mathsf{P}^{1}_{0, A, \beta} = 0$, the following vanishing holds: 

$\bullet$ Coefficient of $a_{1}^{2}$: 
\begin{align*}
\psi_{1}
+
\psi_{2}
-
\Bigg[ \begin{tikzpicture}[baseline=3pt,->,>=,node distance=1.3cm,thick,main node/.style={circle,draw,font=\Large,scale=0.4}]
  \node at (0,0.2) [main node] (A){$\beta_{1}$};
  \node at (1,0.2) [main node] (B){$\beta_{2}$};
  \node at (-0.2,0.8) (p1) {1};
  \node at (1.2,0.8) (p2) {2};
  \draw [-] (A) to (p1);
  \draw [-] (A) to (B);
  \draw [-] (B) to (p2); 
  \end{tikzpicture} \Bigg]
  = 0\,,
\end{align*}

$\bullet$ Coefficient of $a_{1}$: 
\begin{align*}
\xi_{1}
-
\xi_{2}
-
b\psi_{2}
+
b_{1}\Bigg[ \begin{tikzpicture}[baseline=3pt,->,>=,node distance=1.3cm,thick,main node/.style={circle,draw,font=\Large,scale=0.4}]
  \node at (0,0.2) [main node] (A){$\beta_{1}$};
  \node at (1,0.2) [main node] (B){$\beta_{2}$};
  \node at (-0.2,0.8) (p1) {1};
  \node at (1.2,0.8) (p2) {2};
  \draw [-] (A) to (p1);
  \draw [-] (A) to (B);
  \draw [-] (B) to (p2); 
  \end{tikzpicture} \Bigg]
  = 0\,,
\end{align*}

$\bullet$ Coefficient of $a_{1}^{0}$: 
\begin{align*}
-\eta + b^{2}\psi_{2} + 2b\xi_{2}
  -
 b_{2}^{2} \Bigg[ \begin{tikzpicture}[baseline=3pt,->,>=,node distance=1.3cm,thick,main node/.style={circle,draw,font=\Large,scale=0.4}]
  \node at (0,0.2) [main node] (A){$\beta_{1}$};
  \node at (1,0.2) [main node] (B){$\beta_{2}$};
  \node at (-0.2,0.8) (p1) {1};
  \node at (0.2,0.8) (p2) {2};
  \draw [-] (A) to (p1);
  \draw [-] (A) to (B);
  \draw [-] (A) to (p2); 
  \end{tikzpicture} \Bigg]
  -
 b_{1}^{2} \Bigg[ \begin{tikzpicture}[baseline=3pt,->,>=,node distance=1.3cm,thick,main node/.style={circle,draw,font=\Large,scale=0.4}]
  \node at (0,0.2) [main node] (A){$\beta_{1}$};
  \node at (1,0.2) [main node] (B){$\beta_{2}$};
  \node at (-0.2,0.8) (p1) {1};
  \node at (1.2,0.8) (p2) {2};
  \draw [-] (A) to (p1);
  \draw [-] (A) to (B);
  \draw [-] (B) to (p2); 
  \end{tikzpicture} \Bigg]
= 0\,.
\end{align*} 
For more markings, relations can be obtained by pull-back via the morphism $\overline{\mathcal{M}}_{0,n,\beta}(X) \to \overline{\mathcal{M}}_{0,2,\beta}(X)$. The first two relations recover Lee-Pandharipande relations \cite{LP}. The first relation is independent of the target $X$. In fact,
the relation 
\begin{align*}
\psi_{1}
+
\psi_{2}
-
\Bigg[ \begin{tikzpicture}[baseline=3pt,->,>=,node distance=1.3cm,thick,main node/.style={circle,draw,font=\Large,scale=0.4}]
  \node at (0,0.2) [scale=.3,draw,circle,fill=black] (A){};
  \node at (1,0.2) [scale=.3,draw,circle,fill=black] (B){};
  \node at (-0.2,0.8) (p1) {1};
  \node at (1.2,0.8) (p2) {2};
  \draw [-] (A) to (p1);
  \draw [-] (A) to (B);
  \draw [-] (B) to (p2); 
  \end{tikzpicture} \Bigg]
  = 0
\end{align*}
holds in $A^{1}(\mathfrak{M}_{0,2})$. The second relation is a consequence of the following relation
\begin{equation*}
\psi_{2} - \Bigg[ \begin{tikzpicture}[baseline=3pt,->,>=,node distance=1.3cm,thick,main node/.style={circle,draw,font=\Large,scale=0.4}]
  \node at (0,0.2) [scale=.3,draw,circle,fill=black] (A){};
  \node at (1,0.2) [scale=.3,draw,circle,fill=black] (B){};
  \node at (0.8,0.8) (p1) {1};
  \node at (1.2,0.8) (p2) {2};
  \draw [-] (B) to (p1);
  \draw [-] (A) to (B);
  \draw [-] (B) to (p2); 
  \end{tikzpicture} \Bigg] = 0    
\end{equation*}
in $A^{1}(\mathfrak{M}_{0,3})$. After pulling back the relation to $\overline{\mathcal{M}}_{0,3,\beta}(X)$, multiplying with $\xi_{3}$ and pushing-forward to $\overline{\mathcal{M}}_{0,2,\beta}$ by forgetting the third marked point, we get the second relation. The third relation is a consequence of the second relation. The pull-back of the second relation to $\overline{\mathcal{M}}_{0,3,\beta}(X)$ is
\begin{align*}
\xi_{1} - \xi_{2} -b\psi_{2}
+
b \Bigg[ \begin{tikzpicture}[baseline=3pt,->,>=,node distance=1.3cm,thick,main node/.style={circle,draw,font=\Large,scale=0.4}]
  \node at (0,0.2) [main node] (A){$0$};
  \node at (1,0.2) [main node] (B){$\beta$};
  \node at (-0.2,0.8) (p1) {2};
  \node at (0.2,0.8) (p2) {3};
  \node at (1.2,0.8) (p3) {1};
  \draw [-] (A) to (p1);
  \draw [-] (A) to (p2); 
  \draw [-] (A) to (B);
  \draw [-] (B) to (p3); 
  \end{tikzpicture} \Bigg]
 +
 \sum_{\beta_{2} \neq 0} b_{1} \Bigg[ \begin{tikzpicture}[baseline=3pt,->,>=,node distance=1.3cm,thick,main node/.style={circle,draw,font=\Large,scale=0.4}]
  \node at (0,0.2) [main node] (A){$\beta_{1}$};
  \node at (1,0.2) [main node] (B){$\beta_{2}$};
  \node at (-0.2,0.8) (p1) {1};
  \node at (0.2,0.8) (p2) {3};
  \node at (1.2,0.8) (p3) {2};
  \draw [-] (A) to (p1);
  \draw [-] (A) to (p2);
  \draw [-] (A) to (B);
  \draw [-] (B) to (p3); 
  \end{tikzpicture} \Bigg]
 +
\sum_{\beta_{2} \neq 0} b_{1}^{2} \Bigg[ \begin{tikzpicture}[baseline=3pt,->,>=,node distance=1.3cm,thick,main node/.style={circle,draw,font=\Large,scale=0.4}]
  \node at (0,0.2) [main node] (A){$\beta_{1}$};
  \node at (1,0.2) [main node] (B){$\beta_{2}$};
  \node at (-0.2,0.8) (p1) {1};
  \node at (0.8,0.8) (p2) {2};
  \node at (1.2,0.8) (p3) {3};
  \draw [-] (A) to (p1);
  \draw [-] (A) to (B);
  \draw [-] (B) to (p2); 
  \draw [-] (B) to (p3);
  \end{tikzpicture} \Bigg]\,.
\end{align*} 
Take a cup product this relation with $\xi_{1}$ and push-forward to $\overline{\mathcal{M}}_{0,2,\beta}(X)$ which forgets the first marking. After relabelling the third marking to the first marking, the relation is exactly the third relation. 
\end{example}

%%%%%%%%%%%%%%%%%%%%%%%%%%%%%%%%%%%%%%%%%%%%%
\subsection{Genus 1 case}
In genus 1 cases, it is useful to replace $\psi$ classes with boundary strata. 
\begin{example}
 Let $g=1, n=1, d=2$. We substitute $a_{1} = b$ in $\mathsf{P}^{2}_{1,A,\beta}$:
 
$\bullet$ Coefficient of $m^{4}$:
\begin{align*}
0 = {} & (\eta^{2} - 4b^{2}\eta\psi_{1}  -4b\eta\xi_{1} + b^{4}\psi_{1}^{2} + 4b^{2}\xi_{1}^{2} + 4b^{3}\psi_{1}\xi_{1}) \\
& +
(2b_{2}^{2}\eta_{1} + 2b_{2}^{2}\eta_{2} - 2b^{2}b_{2}^{2}\psi_{1} - 4bb_{2}^{2}\xi_{1}) \Bigg[ \begin{tikzpicture}[baseline=3pt,->,>=,node distance=1.3cm,thick,main node/.style={circle,draw,font=\Large,scale=0.4}]
\node at (0,0.2) [main node] (A) {1, $\beta_{1}$};
\node at (1,0.2) [main node] (B) {$\beta_{2}$};
\node at (-0.2,0.8) (p1) {1};
\draw [-] (A) to (p1);
\draw [-] (A) to (B);
\end{tikzpicture} \Bigg] \\
& +
(2b_{1}^{2}\eta_{1} + 2b_{1}^{2}\eta_{2} - 2b^{2}b_{1}^{2}\psi_{1} - 4bb_{1}^{2}\xi_{1}) \Bigg[ \begin{tikzpicture}[baseline=3pt,->,>=,node distance=1.3cm,thick,main node/.style={circle,draw,font=\Large,scale=0.4}]
\node at (0,0.2) [main node] (A) {1, $\beta_{1}$};
\node at (1,0.2) [main node] (B) {$\beta_{2}$};
\node at (1.2,0.8) (p1) {1};
\draw [-] (B) to (p1);
\draw [-] (A) to (B);
\end{tikzpicture} \Bigg]\\
& + 
2(b-b_{1})^{2}b_{3}^{2} \Bigg[ \begin{tikzpicture}[baseline=3pt,->,>=,node distance=1.3cm,thick,main node/.style={circle,draw,font=\Large,scale=0.4}]
\node at (0,0.2) [main node] (A) {1, $\beta_{1}$};
\node at (0.8,0.2) [main node] (B) {$\beta_{2}$};
\node at (1.6,0.2) [main node] (C) {$\beta_{3}$};
\node at (-0.2,0.8) (p1) {1};
\draw [-] (A) to (p1);
\draw [-] (A) to (B);
\draw [-] (B) to (C);
\end{tikzpicture} \Bigg]
+
2b_{1}^{2}b_{3}^{2} \Bigg[ \begin{tikzpicture}[baseline=3pt,->,>=,node distance=1.3cm,thick,main node/.style={circle,draw,font=\Large,scale=0.4}]
\node at (0,0.2) [main node] (A) {1, $\beta_{1}$};
\node at (0.8,0.2) [main node] (B) {$\beta_{2}$};
\node at (1.6,0.2) [main node] (C) {$\beta_{3}$};
\node at (0.8,0.8) (p1) {1};
\draw [-] (B) to (p1);
\draw [-] (A) to (B);
\draw [-] (B) to (C);
\end{tikzpicture} \Bigg] 
+
2b_{1}^{2}(b-b_{3})^{2} \Bigg[ \begin{tikzpicture}[baseline=3pt,->,>=,node distance=1.3cm,thick,main node/.style={circle,draw,font=\Large,scale=0.4}]
\node at (0,0.2) [main node] (A) {1, $\beta_{1}$};
\node at (0.8,0.2) [main node] (B) {$\beta_{2}$};
\node at (1.6,0.2) [main node] (C) {$\beta_{3}$};
\node at (2,0.8) (p1) {1};
\draw [-] (C) to (p1);
\draw [-] (A) to (B);
\draw [-] (B) to (C);
\end{tikzpicture} \Bigg] \\
& + 
2b_{3}^{2}(b-b_{2})^{2} \Bigg[ \begin{tikzpicture}[baseline=3pt,->,>=,node distance=1.3cm,thick,main node/.style={circle,draw,font=\Large,scale=0.4}]
\node at (0,0.2) [main node] (A) {$\beta_{1}$};
\node at (1,0.2) [main node] (B) {1,$\beta_{2}$};
\node at (2,0.2) [main node] (C) {$\beta_{3}$};
\node at (-0.2,0.8) (p1) {1};
\draw [-] (A) to (p1);
\draw [-] (A) to (B);
\draw [-] (B) to (C);
\end{tikzpicture} \Bigg]
+
2b_{2}^{2}b_{3}^{2} \Bigg[ \begin{tikzpicture}[baseline=3pt,->,>=,node distance=1.3cm,thick,main node/.style={circle,draw,font=\Large,scale=0.4}]
\node at (0,0.2) [main node] (A) {$\beta_{1}$};
\node at (1,0.2) [main node] (B) {1,$\beta_{2}$};
\node at (2,0.2) [main node] (C) {$\beta_{3}$};
\node at (1,0.8) (p1) {1};
\draw [-] (B) to (p1);
\draw [-] (A) to (B);
\draw [-] (B) to (C);
\end{tikzpicture} \Bigg]\,,
\end{align*}
 
$\bullet$ Coefficient of $m^{2}$:
\begin{align*}
(-\eta + 2b\xi_{1}) \Bigg[ \begin{tikzpicture}[baseline=3pt,->,>=,node distance=1.3cm,thick,main node/.style={circle,draw,font=\Large,scale=0.4}]
\node at (0,0.2) [main node] (A) {$\beta$};
\node at (0,0.8) (p1) {1}; 
\draw [-] (p1) to (A);
\draw [-] (A) to [out=-45, in=-135,looseness=5] (A);
\end{tikzpicture}\Bigg]
-
b_{2}^{2} \Bigg[ \begin{tikzpicture}[baseline=3pt,->,>=,node distance=1.3cm,thick,main node/.style={circle,draw,font=\Large,scale=0.4}]
\node at (0,0.2) [main node] (A) {$\beta_{1}$};
\node at (1,0.2) [main node] (B) {$\beta_{1}$};
\node at (0,0.8) (p1) {1}; 
\draw [-] (A) to (B);
\draw [-] (p1) to (A);
\draw [-] (A) to [out=-45, in=-135,looseness=5] (A);
\end{tikzpicture}\Bigg]
+
(b^{2}-b_{1}^{2})\Bigg[ \begin{tikzpicture}[baseline=3pt,->,>=,node distance=1.3cm,thick,main node/.style={circle,draw,font=\Large,scale=0.4}]
\node at (0,0.2) [main node] (A) {$\beta_{1}$};
\node at (1,0.2) [main node] (B) {$\beta_{1}$};
\node at (1,0.8) (p1) {1}; 
\draw [-] (A) to (B);
\draw [-] (p1) to (B);
\draw [-] (A) to [out=-45, in=-135,looseness=5] (A);
\end{tikzpicture}\Bigg]
+
2b_{2}^{2}\Bigg[ \begin{tikzpicture}[baseline=3pt,->,>=,node distance=1.3cm,thick,main node/.style={circle,draw,font=\Large,scale=0.4}]
\node at (0,0.2) [main node] (A) {$\beta_{1}$};
\node at (1,0.2) [main node] (B) {$\beta_{2}$};
\node at (0,0.8) (p1) {1};
\draw [-] (A) to (p1);
\draw [-] (A) to [out=40, in=140] (B);
\draw [-] (A) to [out=-40, in=-140] (B);
\end{tikzpicture} \Bigg]
 = 0\,,
\end{align*}

$\bullet$ Coefficient of $m^{0}$: 
\begin{align*}
(\psi_{h} + \psi_{h'}) \Bigg[ \begin{tikzpicture}[baseline=3pt,->,>=,node distance=1.3cm,thick,main node/.style={circle,draw,font=\Large,scale=0.4}]
\node at (0,0.2) [main node] (A) {$\beta$};
\node at (0,0.8) (p1) {1}; 
\draw [-] (p1) to (A);
\draw [-] (A) to [out=-45, in=-135,looseness=5] (A);
\end{tikzpicture} \Bigg]
-
2\Bigg[ \begin{tikzpicture}[baseline=3pt,->,>=,node distance=1.3cm,thick,main node/.style={circle,draw,font=\Large,scale=0.4}]
\node at (0,0.2) [main node] (A) {$\beta_{1}$};
\node at (1,0.2) [main node] (B) {$\beta_{2}$};
\node at (0,0.8) (p1) {1}; 
\draw [-] (p1) to (A);
\draw [-] (A) to [out=40, in=140] (B);
\draw [-] (A) to [out=-40, in=-140] (B);
\end{tikzpicture} \Bigg]
= 
0\,.
\end{align*}
We could not show whether the first relation can be obtained from moduli spaces of curves.
On the other hand, the second  and the third relation comes from  tautological relations on moduli spaces of curves. 
\end{example}
%%%%

\begin{example}
 Let $g=1, n=2, d=2$. There are 26 prestable graphs with at most two edges. The degree of $m$ can be either $0, 2, \textup{or} \, 4$. We prove that the relation which does not contain $\eta$ classes can be obtained from tautological relations on genus 0 prestable curves. The coefficient of $m^{4}$ and $a_{1}^{3}$ is the most complicated example. For simplicity, let $\delta$ be the codimension one boundary stratum of $\overline{\mathcal{M}}_{1,2,\beta}(X)$ associated to the one loop $X$-valued stable graph. After simplification, the relation becomes
\begin{align*}
0 = {} & b(\psi_{1}^{2} - \psi_{2}^{2}) + 2(\psi_{1}+\psi_{2})(\xi_{1} - \xi_{2}) \\ 
& +
(-2b_{2}\psi_{1}+2b_{1}\psi_{2} - 2\xi_{1} + 2\xi_{2} + (b_{1}-b_{2})(\psi_{h} + \psi_{h'})) \Bigg[ \begin{tikzpicture}[baseline=3pt,->,>=,node distance=1.3cm,thick,main node/.style={circle,draw,font=\Large,scale=0.4}]
\node at (0,0.2) [main node] (A) {1, $\beta_{1}$};
\node at (1,0.2) [main node] (B) {$\beta_{2}$};
\node at (-0.2,0.8) (p1) {1};
\node at (1.2,0.8) (p2) {2};
\draw [-] (A) to (p1);
\draw [-] (B) to (p2); 
\draw [-] (A) to (B);
\end{tikzpicture} \Bigg]
\\
& +
(-2b_{1}\psi_{1}+2b_{2}\psi_{2} - 2\xi_{1} + 2\xi_{2} + (b_{2} - b_{1})(\psi_{h} + \psi_{h'})) \Bigg[ \begin{tikzpicture}[baseline=3pt,->,>=,node distance=1.3cm,thick,main node/.style={circle,draw,font=\Large,scale=0.4}]
\node at (0,0.2) [main node] (A) {1, $\beta_{1}$};
\node at (1,0.2) [main node] (B) {$\beta_{2}$};
\node at (-0.2,0.8) (p1) {2};
\node at (1.2,0.8) (p2) {1};
\draw [-] (A) to (p1);
\draw [-] (B) to (p2); 
\draw [-] (A) to (B);
\end{tikzpicture} \Bigg]
\\
& +
(-2b_{1}+2b_{3}) \Bigg[ \begin{tikzpicture}[baseline=3pt,->,>=,node distance=1.3cm,thick,main node/.style={circle,draw,font=\Large,scale=0.4}]
\node at (0,0.2) [main node] (A) {1,$\beta_{1}$};
\node at (1,0.2) [main node] (B) {$\beta_{2}$};
\node at (2,0.2) [main node] (C) {$\beta_{3}$};
\node at (0,0.8) (p1) {1};
\node at (2,0.8) (p2) {2};
\draw [-] (A) to (B);
\draw [-] (B) to (C);
\draw [-] (A) to (p1);
\draw [-] (C) to (p2);
\end{tikzpicture} \Bigg]
+
(2b_{1}-2b_{3}) \Bigg[ \begin{tikzpicture}[baseline=3pt,->,>=,node distance=1.3cm,thick,main node/.style={circle,draw,font=\Large,scale=0.4}]
\node at (0,0.2) [main node] (A) {1,$\beta_{1}$};
\node at (1,0.2) [main node] (B) {$\beta_{2}$};
\node at (2,0.2) [main node] (C) {$\beta_{3}$};
\node at (0,0.8) (p1) {2};
\node at (2,0.8) (p2) {1};
\draw [-] (A) to (B);
\draw [-] (B) to (C);
\draw [-] (A) to (p1);
\draw [-] (C) to (p2);
\end{tikzpicture} \Bigg]
\\
& + (-2b_{1}+2b_{3}) \Bigg[ \begin{tikzpicture}[baseline=3pt,->,>=,node distance=1.3cm,thick,main node/.style={circle,draw,font=\Large,scale=0.4}]
\node at (0,0.2) [main node] (A) {$\beta_{1}$};
\node at (1,0.2) [main node] (B) {1,$\beta_{2}$};
\node at (2,0.2) [main node] (C) {$\beta_{3}$};
\node at (0,0.8) (p1) {1};
\node at (2,0.8) (p2) {2};
\draw [-] (A) to (p1);
\draw [-] (C) to (p2);
\draw [-] (A) to (B);
\draw [-] (B) to (C);
\end{tikzpicture} \Bigg]\,.
\end{align*}
\noindent
On $\overline{\mathcal{M}}_{1,n,\beta}(X)$, we have  
\begin{equation}
\psi_{1} = \frac{1}{12}\delta + \Bigg[ \begin{tikzpicture}[baseline=3pt,->,>=,node distance=1.3cm,thick,main node/.style={circle,draw,font=\Large,scale=0.4}]
\node at (0,0.2) [main node] (A) {1, $\beta_{1}$};
\node at (1,0.2) [main node] (B) {$\beta_{2}$};
\node at (0.8,0.8) (p1) {1};
\node at (1.2,0.8) (p2) {2};
\draw [-] (B) to (p1);
\draw [-] (B) to (p2); 
\draw [-] (A) to (B);
\end{tikzpicture} \Bigg]
+ 
\Bigg[ \begin{tikzpicture}[baseline=3pt,->,>=,node distance=1.3cm,thick,main node/.style={circle,draw,font=\Large,scale=0.4}]
\node at (0,0.2) [main node] (A) {1, $\beta_{1}$};
\node at (1,0.2) [main node] (B) {$\beta_{2}$};
\node at (-0.2,0.8) (p1) {2};
\node at (1.2,0.8) (p2) {1};
\draw [-] (A) to (p1);
\draw [-] (B) to (p2); 
\draw [-] (A) to (B);
\end{tikzpicture} \Bigg]\,,
\end{equation}
and similarly for $\psi_{2}$. The class $b(\psi_{1}^{2}-\psi_{2}^{2})$ is equal to
\begin{equation*}
b\Bigg[ \begin{tikzpicture}[baseline=3pt,->,>=,node distance=1.3cm,thick,main node/.style={circle,draw,font=\Large,scale=0.4}]
\node at (0,0.2) [main node] (A) {1, $\beta_{1}$};
\node at (1,0.2) [main node] (B) {$\beta_{2}$};
\node at (-0.2,0.8) (p1) {2};
\node at (1.2,0.8) (p2) {1};
\draw [-] (A) to (p1);
\draw [-] (B) to (p2); 
\draw [-] (A) to (B);
\end{tikzpicture} \Bigg]^{2}
-
\Bigg[ \begin{tikzpicture}[baseline=3pt,->,>=,node distance=1.3cm,thick,main node/.style={circle,draw,font=\Large,scale=0.4}]
\node at (0,0.2) [main node] (A) {1, $\beta_{1}$};
\node at (1,0.2) [main node] (B) {$\beta_{2}$};
\node at (-0.2,0.8) (p1) {1};
\node at (1.2,0.8) (p2) {2};
\draw [-] (A) to (p1);
\draw [-] (B) to (p2); 
\draw [-] (A) to (B);
\end{tikzpicture} \Bigg]^{2}
+ \frac{1}{6} \delta \Bigg[ \begin{tikzpicture}[baseline=3pt,->,>=,node distance=1.3cm,thick,main node/.style={circle,draw,font=\Large,scale=0.4}]
\node at (0,0.2) [main node] (A) {1, $\beta_{1}$};
\node at (1,0.2) [main node] (B) {$\beta_{2}$};
\node at (-0.2,0.8) (p1) {2};
\node at (1.2,0.8) (p2) {1};
\draw [-] (A) to (p1);
\draw [-] (B) to (p2); 
\draw [-] (A) to (B);
\end{tikzpicture} \Bigg]
- \frac{1}{6} \delta \Bigg[ \begin{tikzpicture}[baseline=3pt,->,>=,node distance=1.3cm,thick,main node/.style={circle,draw,font=\Large,scale=0.4}]
\node at (0,0.2) [main node] (A) {1, $\beta_{1}$};
\node at (1,0.2) [main node] (B) {$\beta_{2}$};
\node at (-0.2,0.8) (p1) {1};
\node at (1.2,0.8) (p2) {2};
\draw [-] (A) to (p1);
\draw [-] (B) to (p2); 
\draw [-] (A) to (B);
\end{tikzpicture} \Bigg]\,.
\end{equation*}
The excess intersection formula gives
\begin{equation*}
\Bigg[ \begin{tikzpicture}[baseline=3pt,->,>=,node distance=1.3cm,thick,main node/.style={circle,draw,font=\Large,scale=0.4}]
\node at (0,0.2) [main node] (A) {1, $\beta_{1}$};
\node at (1,0.2) [main node] (B) {$\beta_{2}$};
\node at (-0.2,0.8) (p1) {1};
\node at (1.2,0.8) (p2) {2};
\draw [-] (A) to (p1);
\draw [-] (B) to (p2); 
\draw [-] (A) to (B);
\end{tikzpicture} \Bigg]^{2}
= 
-(\psi_{h} + \psi_{h'}) \Bigg[ \begin{tikzpicture}[baseline=3pt,->,>=,node distance=1.3cm,thick,main node/.style={circle,draw,font=\Large,scale=0.4}]
\node at (0,0.2) [main node] (A) {1, $\beta_{1}$};
\node at (1,0.2) [main node] (B) {$\beta_{2}$};
\node at (-0.2,0.8) (p1) {1};
\node at (1.2,0.8) (p2) {2};
\draw [-] (A) to (p1);
\draw [-] (B) to (p2); 
\draw [-] (A) to (B);
\end{tikzpicture} \Bigg]
+
2\Bigg[ \begin{tikzpicture}[baseline=3pt,->,>=,node distance=1.3cm,thick,main node/.style={circle,draw,font=\Large,scale=0.4}]
\node at (0,0.2) [main node] (A) {1, $\beta_{1}$};
\node at (1,0.2) [main node] (B) {$\beta_{2}$};
\node at (2,0.2) [main node] (C) {$\beta_{3}$};
\node at (-0.2,0.8) (p1) {1};
\node at (2.2,0.8) (p2) {2};
\draw [-] (A) to (p1);
\draw [-] (C) to (p2); 
\draw [-] (A) to (B);
\draw [-] (B) to (C);
\end{tikzpicture} \Bigg]\,.
\end{equation*}
Using the $g = 0$, $n=2$ relation
\begin{align*}
\psi_{1}
+
\psi_{2}
-
\Bigg[ \begin{tikzpicture}[baseline=3pt,->,>=,node distance=1.3cm,thick,main node/.style={circle,draw,font=\Large,scale=0.4}]
  \node at (0,0.2) [main node] (A){$\beta_{1}$};
  \node at (1,0.2) [main node] (B){$\beta_{2}$};
  \node at (-0.2,0.8) (p1) {1};
  \node at (1.2,0.8) (p2) {2};
  \draw [-] (A) to (p1);
  \draw [-] (A) to (B);
  \draw [-] (B) to (p2); 
  \end{tikzpicture} \Bigg]
  = 0\,,
\end{align*}
%%%%%%%
%%%%%%%
the equation is equivalent to 
\begin{align*}
4(\xi_{1} - \xi_{2}) \Bigg(\frac{1}{12} \delta
+
\Bigg[ \begin{tikzpicture}[baseline=3pt,->,>=,node distance=1.3cm,thick,main node/.style={circle,draw,font=\Large,scale=0.4}]
\node at (0,0.2) [main node] (A) {1, $\beta_{1}$};
\node at (1,0.2) [main node] (B) {$\beta_{2}$};
\node at (0.8,0.8) (p1) {1};
\node at (1.2,0.8) (p2) {2};
\draw [-] (B) to (p1);
\draw [-] (B) to (p2); 
\draw [-] (A) to (B);
\end{tikzpicture} \Bigg]
\Bigg) 
+
\frac{b_{2}}{3} \Bigg[ \begin{tikzpicture}[baseline=3pt,->,>=,node distance=1.3cm,thick,main node/.style={circle,draw,font=\Large,scale=0.4}]
\node at (0,0.2) [main node] (A) {$\beta_{1}$};
\node at (1,0.2) [main node] (B) {$\beta_{2}$};
\node at (-0.2,0.8) (p1) {2};
\node at (1.2,0.8) (p2) {1};
\draw [-] (A) to (p1);
\draw [-] (B) to (p2); 
\draw [-] (A) to (B);
\draw [-] (A) to [out=-45, in=-135,looseness=5] (A);
\end{tikzpicture} \Bigg]
-
\frac{b_{2}}{3} \Bigg[ \begin{tikzpicture}[baseline=3pt,->,>=,node distance=1.3cm,thick,main node/.style={circle,draw,font=\Large,scale=0.4}]
\node at (0,0.2) [main node] (A) {$\beta_{1}$};
\node at (1,0.2) [main node] (B) {$\beta_{2}$};
\node at (-0.2,0.8) (p1) {1};
\node at (1.2,0.8) (p2) {2};
\draw [-] (A) to (p1);
\draw [-] (B) to (p2); 
\draw [-] (A) to (B);
\draw [-] (A) to [out=-45, in=-135,looseness=5] (A);
\end{tikzpicture} \Bigg]
= 0 \,.
\end{align*}
%%%%%
From the $g=0$ DR relation, $\xi_{1} - \xi_{2}$ can be written in terms of boundary strata
\begin{align*}
\xi_{1}-\xi_{2} = b\psi_{2} - b_{1} D(1|2)   
\end{align*}
where $D(1|2)$ is the sum over all codimension one boundary strata which splits the first and the second marked points. Using the relation
\begin{equation*}
\Bigg[ \begin{tikzpicture}[baseline=3pt,->,>=,node distance=1.3cm,thick,main node/.style={circle,draw,font=\Large,scale=0.4}]
\node at (0,0.2) [main node] (A){$\beta_{1}$};
\node at (1,0.2) [main node] (B){$\beta_{2}$};
\node at (-0.2,0.8) (p1) {1};  
\node at (0.2,0.8) (p2) {2};
\node at (1.2,0.8) (p3) {3};
\draw [-] (A) to (p1);
\draw [-] (A) to (p2);
\draw [-] (B) to (p3);
\draw [-] (A) to (B);
\end{tikzpicture} \Bigg]    
=
\Bigg[ \begin{tikzpicture}[baseline=3pt,->,>=,node distance=1.3cm,thick,main node/.style={circle,draw,font=\Large,scale=0.4}]
\node at (0,0.2) [main node] (A){$\beta_{1}$};
\node at (1,0.2) [main node] (B){$\beta_{2}$};
\node at (-0.2,0.8) (p1) {1};  
\node at (0.2,0.8) (p2) {3};
\node at (1.2,0.8) (p3) {2};
\draw [-] (A) to (p1);
\draw [-] (A) to (p2);
\draw [-] (B) to (p3);
\draw [-] (A) to (B);
\end{tikzpicture} \Bigg]
=
\Bigg[ \begin{tikzpicture}[baseline=3pt,->,>=,node distance=1.3cm,thick,main node/.style={circle,draw,font=\Large,scale=0.4}]
\node at (0,0.2) [main node] (A){$\beta_{1}$};
\node at (1,0.2) [main node] (B){$\beta_{2}$};
\node at (-0.2,0.8) (p1) {2};  
\node at (0.2,0.8) (p2) {3};
\node at (1.2,0.8) (p3) {1};
\draw [-] (A) to (p1);
\draw [-] (A) to (p2);
\draw [-] (B) to (p3);
\draw [-] (A) to (B);
\end{tikzpicture} \Bigg]
\end{equation*}
in $(g,n) = (0,3)$, the left hand side is equal to zero. This computation shows that the original relation is obtained from curves.
\end{example}
 
\subsection{Further directions} From the previous examples, we see that tautological relations are much richer than the relations pulled-back from tautological relations on the moduli spaces of stable curves. Since Theorem 3.3 produces relations uniformly for all target $X$, it is likely that, for a given $X$, further relations could also hold. The following questions are some possible directions for further studies.
\begin{enumerate}
\item {\em Lower genus cases}.  
From lower degree computations, Oprea conjectured that all tautological relations on $\Mbar_{0,n,\beta}(X)$ come from tautological relations on the moduli space of genus 0 curves \cite{O}. However, tautological relations on $\mathfrak{M}_{0,n}$ have not been investigated much yet and we do not know if there is a finite number of relations which replace the role of the WDVV equation in $\overline{\mathcal{M}}_{0,n}$.  In genus $1$ cases, tautological relations are generated by the WDVV relation and the Getzler's relation on $\overline{\mathcal{M}}_{1,4}$ \cite{Get}. When the genus is equal to $0$ or $1$, it is unknown whether there exists a finite number of relations which generate all tautological relations on the moduli space of stable maps to $X$.
\item {\em Generalization of the Pixton's 3-spin relations}. 
In \cite{P11, PP}, a system of tautological relations on $\overline{\mathcal{M}}_{g,n}$ was obtained from the study of  Witten's 3-spin classes. In \cite{J1}, the study of the equivariant GW theory of $\mathbb{P}^{1}$ also produces the 3-spin relations. The argument can be applied to produce tautological relations for a target space $X$ as follows. Consider a split vector bundle $V$ over $X$ and its projectivization $\mathbb{P}(V)$. The torus localization formula of the equivariant virtual fundamental class of $\Mbar_{g,n,\beta}(\mathbb{P}(V))$ relative to the moduli space of stable maps to $X$ was studied in \cite{Br,CG}. Then the pole cancellation technique applied to a variant of Givental's formalism \cite{Giv,J1} gives variants of Pixton's 3-spin relations on $\overline{\mathcal{M}}_{g,n,\beta}(X)$ twisted by Chern characters of $V$.
\end{enumerate}

\bibliographystyle{amsplain}

\end{document}